\title[On \MakeLowercase{$p$}-groups related to $G_2($\MakeLowercase{$p$}$)$]{On $p$-groups with automorphism groups\\ related to the Chevalley group $G_2(p)$}
\author[J. Bamberg]{JOHN BAMBERG}
\address{\parbox{\linewidth}{ %
J. BAMBERG, Centre for the Mathematics of Symmetry and Computation,\\
The University of Western Australia, 35 Stirling Highway,\\
Crawley, WA 6009, Australia}}
\email{john.bamberg@uwa.edu.au}
\author[S. D. Freedman]{SAUL D. FREEDMAN}
\address{\parbox{\linewidth}{ %
S. D. FREEDMAN, Centre for the Mathematics of Symmetry and Computation,\\
The University of Western Australia, 35 Stirling Highway,\\
Crawley, WA 6009, Australia\\
\textit{Current address}: School of Mathematics and Statistics,\\
University of St Andrews, St Andrews, KY16 9SS, UK}}
\email{sdf8@st-andrews.ac.uk}
\author[L. Morgan]{LUKE MORGAN}
\address{\parbox{\linewidth}{ %
L. MORGAN, Centre for the Mathematics of Symmetry and Computation,\\
The University of Western Australia, 35 Stirling Highway,\\
Crawley, WA 6009, Australia\\
\textit{Current address}: UP FAMNIT,\\
University of Primorska, Glagolja\v{s}ka 8, 6000 Koper, Slovenia\\
\textit{Also affiliated with}: UP IAM,\\
University of Primorska, Muzejski trg 2, 6000 Koper, Slovenia}}
\email{luke.morgan@famnit.upr.si}
\date{\today}
\subjclass[2010]{20C33, 20G15, 20D15}
\thanks{\textit{\keywordsname}. $p$-group, exterior square, $G_2(q)$\\
\indent The first author acknowledges the support of the Australian Research Council Future Fellowship FT120100036. The second author was supported by a Hackett Foundation Alumni Honours Scholarship, a Hackett Postgraduate Research Scholarship, and an Australian Government Research Training Program Scholarship at The University of Western Australia. The third author was supported by the Australian Research Council grant DE160100081.}
\newtheorem{thm}{Theorem}[section]
\newtheorem*{thm*}{Theorem}
\newtheorem*{conj*}{Conjecture}
\newtheorem{lem}[thm]{Lemma}
\newtheorem{prop}[thm]{Proposition}
\theoremstyle{remark}
\theoremstyle{definition}
\newtheorem{defn}[thm]{Definition}
\newcounter{claim}[thm]
\numberwithin{equation}{section}
\renewcommand{\le}{\leqslant}
\renewcommand{\ge}{\geqslant}
\newcommand{\scO}{\mathcal{O}}
\begin{document}

\begin{abstract}
Let $p$ be an odd prime. We construct a $p$-group $P$ of nilpotency class two, rank seven and exponent $p$, such that $\mathrm{Aut}(P)$ induces $N_{\mathrm{GL}(7,p)}(G_2(p)) = Z(\mathrm{GL}(7,p)) G_2(p)$ on the Frattini quotient $P/\Phi(P)$. The constructed group $P$ is the smallest $p$-group with these properties, having order $p^{14}$, and when $p = 3$, our construction gives two nonisomorphic $p$-groups. To show that $P$ satisfies the specified properties, we study the action of $G_2(q)$ on the octonion algebra over $\mathbb{F}_q$, for each power $q$ of $p$, and explore the reducibility of the exterior square of each irreducible seven-dimensional $\mathbb{F}_q[G_2(q)]$-module.
\end{abstract}

\maketitle

\section{Introduction}
\label{sec:intro}

\noindent The topic of this paper is the problem of representing groups on $p$-groups. The traditional version involves linear representations on vector spaces, that is, on elementary abelian $p$-groups; here we study a nonabelian analogue. As in the traditional case, it is natural to consider simple groups, since this may lead to an understanding of the problem for finite groups in general. In this paper, we begin a programme to study the nonabelian analogue for the exceptional groups of Lie type; the classical groups of Lie type were studied in \cite{bamberg}. The groups of type $G_2$ are the nontwisted exceptional groups of lowest Lie rank, and so we focus first on those. These groups are also interesting from a purely algebraic point of view, being automorphism groups of octonion algebras. To describe our goal more precisely, we require some definitions.

Let $P$ be a finite $p$-group. The \emph{Frattini subgroup} $\Phi(P)$ is a characteristic subgroup of $P$, and therefore there is a natural homomorphism $\theta$ from $\mathrm{Aut}(P)$ to $\mathrm{Aut}(P/\Phi(P))$, the automorphism group of the \emph{Frattini quotient} $P/\Phi(P)$ of $P$. We also know from Burnside's basis theorem that $P/\Phi(P)$ can be identified with the vector space $\mathbb{F}_p^d$, where $d$ is the \emph{rank} of $P$, that is, the minimum size of a generating set for $P$. Therefore, the group $A(P)$ \emph{induced} by $\mathrm{Aut}(P)$ on $P/\Phi(P)$, that is, the image of $\theta$, can be identified with a subgroup of the general linear group $\mathrm{GL}(d,p)$. In fact, this identification preserves the linear action of the group on $P/\Phi(P) \cong \mathbb{F}_p^d$.

Bryant and Kov\'acs \cite{bryant} proved that if $H$ is any subgroup of $\mathrm{GL}(d,p)$, with $d > 1$, then there exists a $p$-group $P$ of rank $d$ such that $A(P) = H$. From the proof of their result, we see that both the class and exponent of the group $P$ are roughly $|\mathrm{GL}(d,p)|$ (depending on both $d$ and $p$), and the order of $P$ is not explicit, but must be enormous. Bamberg \textit{et al.}~\cite{bamberg} were inspired to investigate the necessity of the large exponent and class of the groups constructed in \cite{bryant}. They showed that if $H$ satisfies a few extra conditions, then there exists a $p$-group $P$ with $A(P) = H$, such that $P$ has rank $d$; nilpotency class two, three or four; exponent $p$; and order at most $p^{d^4/2}$. Here, $H$ is a maximal subgroup of $\mathrm{GL}(d,p)$ that does not contain $\mathrm{SL}(d,p)$, where $d > 1$ and $p > 3$. The proof of the existence of $P$ in this case also requires that $H$ lies in a particular subset of the \emph{Aschbacher classes} of $\mathrm{GL}(d,p)$, where the Aschbacher classes of a classical group describe its subgroups, via Aschbacher's theorem \cite{aschbacherthm}. Bamberg \textit{et al.}~also showed that there are cases with $p = 3$ where there exists a corresponding $p$-group of nilpotency class two.

In odd characteristic, the \emph{natural representation} of a finite Chevalley group of type $G_2$ is on a seven-dimensional vector space, and hence we can consider $G_2(p)$ (for $p$ odd) as a subgroup of $\mathrm{GL}(7,p)$. We determine below a relatively small nonabelian counterpart to this representation. The class, exponent and order of the $p$-group $P$ shown to exist in the theorem below are drastically lower than the bounds that can be deduced from \cite{bryant}. Moreover, $P$ is the smallest $p$-group of the given nilpotency class, rank and exponent whose associated induced group is $Z(\mathrm{GL}(7,p)) G_2(p)$.

\begin{thm}
\label{thm:main}
Let $p$ be an odd prime. There exists a $p$-group $P$ of nilpotency class two, rank seven, exponent $p$ and order $p^{14}$, such that $A(P)= Z(\mathrm{GL}(7,p)) G_2(p)$. If $p = 3$, then there exist two such $p$-groups that are nonisomorphic.
\end{thm}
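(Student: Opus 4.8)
The plan is to construct $P$ explicitly as a quotient of the free class-two exponent-$p$ group on seven generators, and then identify its automorphism group with the help of the representation theory of $G_2(q)$. Concretely, writing $V = \mathbb{F}_p^7$ for the natural module of $G = G_2(p)$, the free class-two exponent-$p$ group $F$ of rank seven has $F/\Phi(F) \cong V$ and $\Phi(F) = [F,F] \cong \wedge^2 V$ (for $p$ odd). Thus $p$-groups of class (at most) two, rank seven and exponent $p$ correspond to $G$-invariant (in fact $\mathrm{GL}(7,p)$-compatible) quotients $\wedge^2 V / U$, where $U$ is a subspace of $\wedge^2 V$; and for $A(P)$ to be all of $Z(\mathrm{GL}(7,p))G$ one wants $U$ to be a submodule of $\wedge^2 V$ as a $\mathbb{Z}(\mathrm{GL}(7,p))G$-module and the setwise stabiliser in $\mathrm{GL}(7,p)$ of $U$ to be exactly $Z(\mathrm{GL}(7,p))G$. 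The octonion-algebra description of $G_2$ furnishes a natural $G$-equivariant map $V \times V \to V$, equivalently a $G$-module homomorphism $\wedge^2 V \to V$, whose kernel $W$ is $14$-dimensional; one takes $U = W$, so that $\Phi(P) = \wedge^2 V / W \cong V$, giving $|P| = p^7 \cdot p^7 = p^{14}$. For $p = 3$ there is an exceptional extra composition factor in $\wedge^2 V$ (reflecting the special behaviour of $G_2(3)$), which is what produces the \emph{two} nonisomorphic groups; I would pin down the two relevant submodules $U$ of codimension $7$ in that case.

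The verification then breaks into several steps. \textbf{Step 1:} establish the $\mathbb{F}_q[G_2(q)]$-module structure of $\wedge^2 V$ for every power $q$ of $p$ — in particular that $\wedge^2 V$ has a $7$-dimensional homomorphic image isomorphic to (a twist of) $V$, with the remaining composition factor(s) being the $14$-dimensional adjoint-type module $\mathfrak{g}_2$, except when $p = 3$ where $\mathfrak{g}_2$ itself is reducible. This is exactly the ``reducibility of the exterior square'' study advertised in the abstract, and it is most cleanly done via the octonion multiplication: the cross product on the trace-zero octonions gives the surjection $\wedge^2 V \to V$, and its kernel is the Lie algebra $\mathfrak{g}_2$ acting on $V$. \textbf{Step 2:} define $P = F/W$ where $W \le [F,F] \cong \wedge^2 V$ is the preimage of the chosen submodule, check $P$ has class exactly two (i.e.\ $W \ne [F,F]$), rank exactly seven (i.e.\ $W \le \Phi(F)$, automatic), exponent $p$ (automatic from $F$), and order $p^{14}$ (i.e.\ $\dim W = 7$, equivalently $W$ has codimension $7$ in the rank-$7$-generated Frattini subgroup... being careful that $\dim\wedge^2V = 21$ and $\dim(\wedge^2 V/W) = 7$). \textbf{Step 3:} show $Z(\mathrm{GL}(7,p))G_2(p) \le A(P)$ by exhibiting, for each such linear map, an automorphism of $P$ inducing it — this follows because scalars and $G_2(p)$ preserve the submodule $U$, hence lift to automorphisms of $F$ that descend to $P$.

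\textbf{Step 4} is the heart of the matter and the step I expect to be the main obstacle: proving $A(P) \le Z(\mathrm{GL}(7,p))G_2(p)$, i.e.\ that every automorphism of $P$ induces on $V$ a map normalising $G_2(p)$. Equivalently, one must show the stabiliser in $\mathrm{GL}(7,p)$ of the submodule $U \le \wedge^2 V$ is no larger than $Z(\mathrm{GL}(7,p))G_2(p)$. I would approach this as follows: $U$ (or its complement / the induced map $\wedge^2 V \to V$) encodes, up to scalars, the octonion cross product, and so its stabiliser is the group of \emph{similitudes} of that alternating trilinear (or the associated symmetric trilinear) form on $V$ — and one shows this similitude group is $Z(\mathrm{GL}(7,p))G_2(p)$. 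This is a classical-invariant-theory fact about $G_2$ (the stabiliser of a generic alternating $3$-form in $7$ variables is $G_2$), but it must be proved over $\mathbb{F}_p$ for all odd $p$, including $p = 3$; handling $p = 3$, where the module structure degenerates and the two groups appear, is the delicate part, and there one must check that the two candidate submodules have the ``right'' stabiliser and that the two resulting groups are genuinely nonisomorphic (e.g.\ by a structural invariant such as the isomorphism type of a characteristic subgroup, or the action on $\Phi(P)$). Finally, minimality — that no smaller $p$-group of this class, rank and exponent has induced group $Z(\mathrm{GL}(7,p))G_2(p)$ — follows since any such group is a quotient $F/W'$ with $W'$ a $Z(\mathrm{GL}(7,p))G_2(p)$-submodule of $\wedge^2 V$ of codimension $\ge 1$ below $[F,F]$, and the composition-factor analysis of Step 1 shows the only proper nonzero such submodules giving the full induced group have codimension exactly $7$ (the $14$-dimensional $\mathfrak{g}_2$, or for $p=3$ one of its two analogues), forcing $|P| \ge p^{14}$.
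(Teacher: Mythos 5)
Your construction itself coincides with the paper's: take $U$ to be the $14$-dimensional kernel of the $G_2(p)$-equivariant map $\wedge^2 V \to V$, $x\wedge y \mapsto \mathrm{Im}(xy)$, furnished by octonion multiplication (Lemma \ref{lem:ftildecoll}), and form $P = P_U$ of order $p^{7+21-14}=p^{14}$; your Steps 1--3 match \S\ref{sec:reducibility} and the standard correspondence recalled in \S\ref{sec:intro}. The genuine gap is your Step 4, which you rightly identify as the heart of the matter but do not actually carry out. You propose to show that the stabiliser of $U$ in $\mathrm{GL}(7,p)$ is the similitude group of the associated alternating trilinear form and then invoke the classical ``generic $3$-form in seven variables'' characterisation of $G_2$. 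Two real obstacles remain on that route. First, an element $g$ stabilising the \emph{subspace} $U=\ker\tilde f$ only satisfies $\mathrm{Im}(x^g y^g) = (\mathrm{Im}(xy))^h$ for \emph{some} $h \in \mathrm{GL}(7,p)$ a priori unrelated to $g$; to convert this into preservation of the trilinear form $(x,y,z)\mapsto \beta(\mathrm{Im}(xy),z)$ up to scalars you must first tie $h$ to $g$ via the quadratic form, which does not come for free. Second, the identification of the full similitude group of that form over $\mathbb{F}_p$ with $Z(\mathrm{GL}(7,p))G_2(p)$, uniformly in odd $p$ including $p=3$, is itself a nontrivial theorem needing proof or citation. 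The paper sidesteps both issues by pure group theory: $\Omega(7,p)$ acts irreducibly on $\wedge^2 V$, the classification of maximal subgroups of $\mathrm{SL}(7,p)$ forces any overgroup of $G_2(p)$ in $\mathrm{SL}(7,p)$ not containing $\Omega(7,p)$ into $Z(\mathrm{SL}(7,p))G_2(p)$, and $N_{\mathrm{GL}(7,p)}(G_2(p))=Z(\mathrm{GL}(7,p))G_2(p)$ by Clifford theory plus Schur's lemma (Lemma \ref{lem:g2overcoll}).

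Your treatment of $p=3$ is also not where the paper puts the weight, and as stated it is incomplete. You attribute the two nonisomorphic groups to an ``extra composition factor'' yielding two codimension-$7$ submodules that you would separate by an unspecified structural invariant; the paper instead derives them from the two inequivalent irreducible seven-dimensional representations of $G_2(3)$ (interchanged by the exceptional graph automorphism, Lemma \ref{lem:g2irrmod}), each producing its own subspace $U$ and group $P_U$, with nonisomorphism following from the inequivalence of the two actions of $Z(\mathrm{GL}(7,3))G_2(3)$ on the Frattini quotient. In your framework the missing step is precise: two subspaces $U_1, U_2$ give isomorphic groups $P_{U_1}\cong P_{U_2}$ exactly when they lie in one $\mathrm{GL}(7,3)$-orbit, so you must prove your two candidate submodules are not $\mathrm{GL}(7,3)$-conjugate; naming ``a characteristic subgroup'' does not do this. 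Finally, note that your closing minimality discussion needs Theorem \ref{thm:g2maxsubmod} (no proper submodule of dimension exceeding $14$), not merely the existence of the $14$-dimensional one, though minimality is not part of the statement being proved.
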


As we show in this paper, $Z(\mathrm{GL}(7,p)) G_2(p)$ is the normaliser of $G_2(p)$ in $\mathrm{GL}(7,p)$. This normaliser is not a maximal subgroup of the general linear group, and hence it does not satisfy the conditions required by Bamberg \textit{et al.} The methodology required to prove Theorem \ref{thm:main} is therefore different from that used in \cite{bamberg}, and depends upon realising $G_2(p)$ as the automorphism group of the octonion algebra over $\mathbb{F}_p$. This elementary approach highlights the connection between the groups of type $G_2$ and this algebra, and emphasises the uniqueness of these groups among the other groups of Lie type. In a sequel paper (`on $p$-groups with automorphism groups related to the exceptional Chevalley groups', submitted for publication), the second author proves similar results for each family of nontwisted exceptional groups, using a more Lie-theoretic approach.

Our method of proof for Theorem \ref{thm:main} is to appeal to a description of groups of class two and exponent $p$, as in \cite[\S2--3]{eick}, \cite[Ch.~9.4]{holt} and \cite[\S2]{obrien} (see also \cite[\S2]{bamberg}). Such a $p$-group $P$ is a quotient of the \emph{universal $p$-group} of nilpotency class two, rank $d$ and exponent $p$, that is, the largest finite group with these properties. With $V:= \mathbb{F}_p^d$, there exists a proper subspace $U$ of $A^2 V$, the exterior square of $V$, such that $P= P_U$, where
\begin{equation}
\label{setofP} P_U := V \times A^2V/U,
\end{equation}
equipped with the multiplication defined by
\begin{equation*} (v,w+U)(v',w'+U) := (v+v',w+w'+v\wedge v' +U). \end{equation*}
Moreover, the automorphisms induced on the Frattini quotient of $P$ are easy to understand in that $A(P) = N_{\mathrm{GL}(d,p)}(U)$. Conversely, for each proper subspace $U$ of $A^2 V$, the group $P_U$ of class two and exponent $p$ defined in the way above enjoys the property $A(P_U)= N_{\mathrm{GL}(d,p)}(U)$.
Thus our strategy to exhibit the group $P$ in Theorem~\ref{thm:main} is to consider the reducibility of the exterior square $A^2V$ of a seven-dimensional module $V$ for $G_2(p)$.

It must be remarked that the stabiliser in $\mathrm{GL}(d,p)$ of any subspace of $A^2V$ contains the centre of the general linear group, which acts via scalar multiplication. Hence the group $Z(\mathrm{GL}(7,p)) G_2(p)$ from Theorem \ref{thm:main} is the smallest group containing $G_2(p)$ that we can induce on the Frattini quotient of a $p$-group of nilpotency class two, rank seven and exponent $p$.

The paper is organised as follows. In \S\ref{sec:reducibility}, we explore the reducibility of the exterior square of each irreducible seven-dimensional $G_2(p)$-module in order to find a suitable subspace $U$ to substitute in \eqref{setofP}. In particular, we determine the maximum dimension of such a subspace in order to determine the minimum order of the $p$-group $P_U$. These results regarding reducibility are interesting in their own right, and may be useful in other applications, and hence we explore the more general case of the Chevalley group $G_2(q)$, where $q$ is a power of an odd prime. Some of our results are even more general, and apply to the larger family of groups $G_2(\mathbb{F})$, where $\mathbb{F}$ is any field of characteristic not equal to two. In \S\ref{sec:overgroups}, we explore part of the overgroup structure of $G_2(q)$ in $\mathrm{GL}(7,q)$. We then use the results of \S\ref{sec:reducibility}--\ref{sec:overgroups} 
 to prove Theorem \ref{thm:main} in \S\ref{sec:mainproof}. Finally, we conclude in \S\ref{sec:concl} with suggestions for future research.

\section{Reducibility of the exterior square of an irreducible seven-dimensional $G_2(q)$-module}
\label{sec:reducibility}

In this section, we explore the reducibility of the exterior square of each irreducible seven-dimensional $G_2(q)$-module, with $q$ an odd prime power. We note that this reducibility was investigated in \cite[Ch.~9.3.2]{schroder}. Here, we consider a totally different approach that also yields results in the more general case of the group $G_2(\mathbb{F})$, where $\mathbb{F}$ is any field of characteristic not equal to two.

\begin{lem}[{\cite[Ch.~4.3.2]{wilson}}]
\label{lem:octonions}
Let $\mathbb{O}$ be the eight-dimensional octonion algebra over $\mathbb{F}$, with basis $\{1,i_0,i_1,\ldots,i_6\}$ and multiplication defined in terms of basis vectors as:
\begin{enumerate}[label={(\roman*)},font=\upshape]
\item $1e = e1 = e$ for each $e \in \{1, i_0, i_1, \ldots, i_6\}$;
\item $i_t^2 = -1$ for each $t \in \mathbb{Z}/7\mathbb{Z}$; and
\item $i_t i_{t+1} = i_{t+3}$, $i_{t+1} i_{t+3} = i_{t}$, and $i_{t+3} i_t = i_{t+1}$ for each $t \in \mathbb{Z}/7\mathbb{Z}$, with subscripts interpreted modulo seven.
\end{enumerate}
Then $G_2(\mathbb{F}) := \mathrm{Aut}(\mathbb{O})$, with $G_2(\mathbb{F}) = G_2(q)$ if $\mathbb{F}$ is the finite field of order $q$.
\end{lem}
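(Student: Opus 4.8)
The lemma records a classical fact, so the plan is to split it into two pieces: (a) that the rules (i)--(iii) equip the $8$-dimensional space with basis $\{1, i_0, \dots, i_6\}$ with the structure of an octonion algebra over $\mathbb{F}$, and (b) that over a finite field the automorphism group of this algebra is the Chevalley group $G_2(q)$. Part (b) I would take wholesale from the literature; part (a) is where the work lies, and it is essentially combinatorial.

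For part (a), the first step is to recognise that the triples $\{t, t+1, t+3\}$ with $t \in \mathbb{Z}/7\mathbb{Z}$ are precisely the seven lines of the Fano plane $\mathrm{PG}(2,2)$, since $\{0, 1, 3\}$ is a perfect difference set modulo $7$. Hence any two distinct basis vectors among $i_0, \dots, i_6$ lie on a unique such line, so (i)--(iii) prescribe the product of every ordered pair of basis vectors and thereby determine a unique bilinear multiplication. The second step is to check self-consistency: the cyclic shift $i_t \mapsto i_{t+1}$ (fixing $1$) is an algebra automorphism by construction, the three products displayed for each line in (iii) are cyclic permutations of one another, and together with (ii) they force $i_a i_b = -i_b i_a$ for distinct $a, b$. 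The third step is to exhibit the composition structure: with conjugation given by $\bar 1 = 1$, $\bar{i_t} = -i_t$, and norm $N(x) := x\bar{x}$, one evaluates $N$ on a general element to find the diagonal quadratic form $\langle 1, \dots, 1\rangle$ in $8$ variables, which is nondegenerate, and one checks $N(xy) = N(x)N(y)$ on basis vectors --- this again reduces to the incidence arithmetic of the Fano plane together with the sign rule above. An $8$-dimensional unital algebra carrying a nondegenerate quadratic form that permits composition is an octonion algebra, which settles part (a).

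For part (b), I would observe that a nondegenerate quadratic form in more than four variables over $\mathbb{F}_q$ is isotropic, so the norm form of $\mathbb{O}$ is isotropic and $\mathbb{O}$ is the \emph{split} octonion algebra over $\mathbb{F}_q$, the unique octonion algebra over that field up to isomorphism. The automorphism group of the split octonion algebra over an arbitrary field is the corresponding Chevalley group of type $G_2$ --- a classical result, recorded for instance in \cite[Ch.~4.3]{wilson} and in Springer and Veldkamp's book on octonions --- whence $\mathrm{Aut}(\mathbb{O}) = G_2(q)$.

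I expect the main obstacle to be a matter of care rather than of depth: the consistency check of the multiplication table and the verification of multiplicativity of the norm are easy to spoil by a sign error, and should be organised so that the cyclic symmetry $i_t \mapsto i_{t+1}$ and the point--line structure of the Fano plane carry most of the bookkeeping. The one genuinely deep input --- the identification of the automorphism group of the split octonions with $G_2$ --- I would cite rather than reprove.
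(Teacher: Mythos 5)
The paper offers no proof of this lemma at all: it is stated as a definition of $G_2(\mathbb{F})$ together with a citation to Wilson [Ch.~4.3.2] for the identification with the Chevalley group, so your proposal is not so much an alternative route as a fleshing-out of what the citation is being asked to carry. Your outline is sound and matches the standard treatment: the triples $\{t,t+1,t+3\}$ are the lines of the Fano plane, so (i)--(iii) determine a unique bilinear product; the norm form is $\langle 1,\ldots,1\rangle$ in eight variables, hence isotropic over $\mathbb{F}_q$, so $\mathbb{O}$ is the split octonion algebra, and $\mathrm{Aut}(\mathbb{O})$ is the Chevalley group $G_2(q)$ by the classical result you (and the paper) cite. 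The one step you should phrase more carefully is the verification that the norm permits composition: $N(xy)=N(x)N(y)$ is a biquadratic identity, so checking it literally ``on basis vectors'' proves nothing (both sides are $1$). You must first polarise it to the multilinear identity $\langle xy,x'y'\rangle+\langle xy',x'y\rangle=\langle x,x'\rangle\langle y,y'\rangle$, verify \emph{that} on basis vectors via the Fano-plane incidences and the sign rule, and then recover the original identity by setting $x=x'$, $y=y'$ --- which is legitimate only away from characteristic $2$, consistent with the paper's standing hypothesis. With that adjustment your argument goes through; the genuinely deep input (that the automorphism group of the split octonions is $G_2$) is rightly cited rather than reproved, exactly as the paper does.
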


This octonion algebra is an eight-dimensional analogue of the two-dimensional algebra of the complex numbers over the reals and the four-dimensional quaternion algebra. It follows from the definition of multiplication in $\mathbb{O}$ that $i_t i_s = -i_s i_t$ for all distinct ${s, t \in \mathbb{Z}/7\mathbb{Z}}$. Furthermore, $1$ is the unique multiplicative identity of $\mathbb{O}$. The elements of $\mathbb{O}$ are called \emph{octonions}, and the \emph{real octonions} (respectively, the \emph{imaginary octonions}) are the elements of $\langle 1 \rangle$ (respectively, of $\scO:=\langle i_0,i_1,\ldots,i_6\rangle$). If $x = b1 + \sum_{t=0}^6 a_t i_t$ is an octonion, with $b, a_0, a_1, \ldots, a_6 \in \mathbb{F}$, then the \emph{real part} of $x$ is $\mathrm{Re}(x) := b1$, its \emph{imaginary part} is $\mathrm{Im}(x):=\sum_{t=0}^6 a_t i_t \in \scO$, and its \emph{conjugate} is $\overline{x} := b1 - \sum_{t=0}^6 a_t i_t$. The function $\mathrm{Re}$ (respectively, $\mathrm{Im}$) is the natural $\mathbb{F}$-linear projection map from $\mathbb{O} = \langle 1 \rangle \oplus \scO$ to $\langle 1 \rangle$ (respectively, to $\scO$).

The following result and its proof are given by Wilson \cite[pages 119 and 121]{wilson}.

\begin{prop}
\label{prop:g2imact}
The group $G_2(\mathbb{F})$ stabilises $\scO$ (as a subspace of the algebra $\mathbb{O}$).
\end{prop}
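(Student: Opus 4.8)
The plan is to show that the imaginary octonions $\scO$ admit an intrinsic, multiplication-theoretic characterisation inside $\mathbb{O}$ that is manifestly preserved by algebra automorphisms. The natural candidate is: an octonion $x$ lies in $\langle 1 \rangle \oplus \{y : y^2 \in \langle 1 \rangle\}$ precisely when $x^2$ is a real octonion, and then one refines this to pin down $\scO$ itself. First I would compute $x^2$ for a general octonion $x = b1 + \sum_{t=0}^6 a_t i_t$. Using $1x = x1 = x$, the relations $i_t^2 = -1$, and the anticommutativity $i_s i_t = -i_t i_s$ for $s \neq t$ (so that all mixed terms $a_s a_t (i_s i_t + i_t i_s)$ vanish), one gets
\begin{equation*}
x^2 = \left(b^2 - \sum_{t=0}^6 a_t^2\right)\!1 + 2b\sum_{t=0}^6 a_t i_t = \left(b^2 - \sum_{t=0}^6 a_t^2\right)\!1 + 2b\,\mathrm{Im}(x).
\end{equation*}
Hence $x^2 \in \langle 1 \rangle$ if and only if $b\,\mathrm{Im}(x) = 0$, i.e. if and only if $x \in \langle 1 \rangle$ or $x \in \scO$ (here using that $\mathrm{char}\,\mathbb{F} \neq 2$, so $2b = 0$ forces $b = 0$).

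Next I would package this as: $S := \{x \in \mathbb{O} : x^2 \in \langle 1 \rangle\} = \langle 1 \rangle \cup \scO$. Any $g \in G_2(\mathbb{F}) = \mathrm{Aut}(\mathbb{O})$ satisfies $g(x)^2 = g(x^2)$, and $g$ fixes $1$ (the unique multiplicative identity, as noted after Lemma \ref{lem:octonions}), hence fixes $\langle 1 \rangle$ setwise; therefore $g(x)^2 \in \langle 1 \rangle$ whenever $x^2 \in \langle 1 \rangle$, so $g(S) = S$. It remains to separate the two pieces of $S$. Since $g$ is $\mathbb{F}$-linear and bijective with $g(\langle 1 \rangle) = \langle 1 \rangle$, and since $\scO$ is a complement to $\langle 1 \rangle$ in $\mathbb{O}$ while $S = \langle 1 \rangle \cup \scO$ is a union of two subspaces whose only common point is $0$, the image $g(\scO)$ is again an $\mathbb{F}$-subspace contained in $S$; a subspace contained in a union of two subspaces must lie in one of them, and $g(\scO)$ cannot lie in $\langle 1 \rangle$ for dimension reasons ($\dim \scO = 7 > 1 = \dim\langle 1 \rangle$, and $g$ is injective). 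Therefore $g(\scO) = \scO$.

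The only genuinely delicate point is the very last paragraph: if $\mathbb{F} = \mathbb{F}_2$ one of the steps collapses (indeed $\mathrm{char}\,\mathbb{F} \neq 2$ is used twice, once to get $S = \langle 1 \rangle \cup \scO$ and implicitly to ensure $\langle 1 \rangle$ and $\scO$ genuinely intersect only in $0$ inside the ambient space), and over a field with just two nonzero... more precisely, over $\mathbb{F}_2$ or $\mathbb{F}_3$ one should check that ``a subspace contained in a union of two subspaces lies in one of them'' still applies — it does, since a vector space over any field is never the union of two proper subspaces. I expect the main obstacle to be purely bookkeeping: verifying carefully that the cross-terms in $x^2$ all cancel, which rests on the identity $i_s i_t = -i_t i_s$ for distinct $s,t$ — itself a consequence of relations (ii) and (iii) of Lemma \ref{lem:octonions}, as already remarked in the text. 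Everything else is formal.
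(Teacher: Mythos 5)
Your proof is correct, but it takes a genuinely different route from the paper. The paper's argument is form-theoretic: it introduces the bilinear form $\beta(x,y) = \mathrm{Re}(x\overline{y})$, invokes the (cited) fact that $G_2(\mathbb{F})$ preserves $\beta$, and identifies $\scO$ as the orthogonal complement $\langle 1 \rangle^{\perp}$ of the fixed line $\langle 1 \rangle$. You instead give a purely multiplicative, self-contained characterisation: $\{x \in \mathbb{O} : x^2 \in \langle 1 \rangle\} = \langle 1 \rangle \cup \scO$ (valid since $\mathrm{char}\,\mathbb{F} \neq 2$, which is in force throughout this section), this set is visibly preserved by any algebra automorphism (which fixes the unique identity $1$), and then the elementary fact that a subspace contained in a union of two subspaces lies in one of them, together with a dimension count, forces $g(\scO) = \scO$. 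Your computation of $x^2$ and the cancellation of cross-terms via $i_s i_t = -i_t i_s$ is right, and the union-of-two-subspaces lemma does hold over every field, so your worry about $\mathbb{F}_2$ and $\mathbb{F}_3$ is unfounded (and $\mathbb{F}_2$ is excluded anyway). What each approach buys: yours avoids quoting the nontrivial fact that automorphisms preserve the norm form, so it is more elementary and self-contained; the paper's is shorter given that citation and sets up the orthogonal form $\beta$ that is needed again in \S\ref{sec:overgroups}, where $G_2(q) \le \mathrm{GO}(7,q)$ is essential, so the form-theoretic viewpoint is not wasted effort there.
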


\begin{proof}
If we identify $\langle 1 \rangle$ with $\mathbb{F}$, then we can define a bilinear form $\beta:\mathbb{O} \times \mathbb{O} \to \mathbb{F}$ by $(x,y)\beta := \mathrm{Re}(x\overline y)$, and $G_2(\mathbb{F})$ preserves this form. Since $G_2(\mathbb{F})$ is the automorphism group of $\mathbb{O}$, it fixes $1 \in \mathbb{O}$, and hence it stabilises $\langle 1 \rangle$. Therefore, $G_2(\mathbb{F})$ stabilises the orthogonal complement ${\langle 1 \rangle}^\perp$ of $\langle 1 \rangle$ with respect to $\beta$, which is $\scO$.
\end{proof}

\begin{lem}
\label{lem:ftildecoll}
Let $f:\scO^2 \to \scO$ be the map defined by $(x,y)f := \mathrm{Im}(xy)$ for $x,y \in \scO$. Then:
\begin{enumerate}[label={(\roman*)}, font=\upshape]
\item There exists a unique linear map ${\tilde f: A^2\scO \to \scO}$ defined by $(x \wedge y)\tilde f := (x,y)f$ for all $x, y \in \scO$. \label{prop:ftilde}
\item The map $\tilde f$ is an $\mathbb{F}[G_2(\mathbb{F})]$-homomorphism. \label{prop:Fwedgehom}
\item The group $G_2(\mathbb{F})$ stabilises the subspace $\ker \tilde f$ of $A^2\scO$ (as a vector space), which has dimension $14$. \label{lem:g214dim}
\end{enumerate}
\end{lem}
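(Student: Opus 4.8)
The plan is to establish the three parts in order, since each builds on the previous. For part \ref{prop:ftilde}, I would invoke the universal property of the exterior square: a linear map $A^2\scO \to \scO$ is determined by an alternating bilinear map $\scO \times \scO \to \scO$. So it suffices to check that $f$ itself is bilinear and alternating. Bilinearity is immediate since multiplication in $\mathbb{O}$ is bilinear and $\mathrm{Im}$ is linear. For the alternating property, $(x,x)f = \mathrm{Im}(xx)$, and I must check $x^2$ is a real octonion for every $x \in \scO$; this follows from the multiplication rules, since for $x = \sum_t a_t i_t$ the cross terms $a_s a_t(i_s i_t + i_t i_s)$ vanish by the anticommutativity noted after Lemma~\ref{lem:octonions}, leaving $x^2 = -\sum_t a_t^2 \cdot 1 \in \langle 1 \rangle$. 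Hence $(x,x)f = 0$ and the unique $\tilde f$ exists.

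For part \ref{prop:Fwedgehom}, recall that $G_2(\mathbb{F})$ acts on $A^2\scO$ diagonally, via $(x \wedge y)^g = x^g \wedge y^g$ (this is well-defined precisely because $G_2(\mathbb{F})$ stabilises $\scO$ by Proposition~\ref{prop:g2imact}, and acts linearly on it). Then for $g \in G_2(\mathbb{F})$,
\[
((x \wedge y)^g)\tilde f = (x^g \wedge y^g)\tilde f = \mathrm{Im}(x^g y^g) = \mathrm{Im}((xy)^g),
\]
using that $g$ is an algebra automorphism. Since $g$ also stabilises $\langle 1 \rangle$ and $\scO$, it commutes with the projection $\mathrm{Im}$, so $\mathrm{Im}((xy)^g) = (\mathrm{Im}(xy))^g = ((x \wedge y)\tilde f)^g$. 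As $A^2\scO$ is spanned by simple wedges $x \wedge y$, this shows $\tilde f$ is a $G_2(\mathbb{F})$-homomorphism.

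For part \ref{lem:g214dim}, since $\tilde f$ is a module homomorphism, its kernel is a $G_2(\mathbb{F})$-submodule of $A^2\scO$, hence stabilised by the group. The dimension claim is the rank–nullity computation: $\dim A^2\scO = \binom{7}{2} = 21$, so $\dim \ker \tilde f = 21 - \dim(\mathrm{im}\,\tilde f)$, and I need $\tilde f$ to be surjective, i.e.\ $\mathrm{im}\,\tilde f = \scO$ has dimension $7$. Surjectivity is the only step requiring genuine computation rather than formal nonsense: it suffices to hit each basis vector $i_r$. Using rule (iii), $i_t i_{t+1} = i_{t+3}$, so $(i_t \wedge i_{t+1})\tilde f = \mathrm{Im}(i_t i_{t+1}) = i_{t+3}$, and as $t$ ranges over $\mathbb{Z}/7\mathbb{Z}$ the index $t+3$ also ranges over all of $\mathbb{Z}/7\mathbb{Z}$; hence every $i_r$ lies in the image and $\tilde f$ is onto. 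I expect this surjectivity verification — really just reading off the multiplication table — to be the main (though minor) obstacle; everything else is the universal property of $A^2$, the diagonal action, and rank–nullity.
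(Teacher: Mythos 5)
Your proof is correct and follows essentially the same route as the paper: the universal property of the exterior square for part (i), equivariance checked on a spanning set of wedges for part (ii), and rank--nullity plus explicit surjectivity from the multiplication table for part (iii). The only (harmless) differences are that you verify the alternating property by an explicit expansion of $x^2$ where the paper simply cites the multiplication rules, and you argue equivariance on general simple wedges using that $g$ commutes with $\mathrm{Im}$, where the paper restricts to basis wedges whose products are already imaginary.
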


\begin{proof}
\begin{NoHyper}\ref{prop:ftilde}\end{NoHyper} It is clear that $f$ is bilinear, and it follows from the octonion multiplication rules that the square of an imaginary octonion is real, which means that $f$ is alternating. Thus by the universal property of the exterior square, there exists a unique linear map ${\tilde f: A^2\scO \to \scO}$ such that $(x \wedge y)\tilde f = (x,y)f$ for all $x, y \in \scO$.

\begin{NoHyper}\ref{prop:Fwedgehom}\end{NoHyper} For distinct basis vectors $i_s$ and $i_t$, we have $(i_s \wedge i_t)\tilde f = i_s i_t$, since this product is imaginary. As $G_2(\mathbb{F})$ acts linearly on $\scO$ by Proposition \ref{prop:g2imact}, the image $(i_s i_t)^g = i_s^g i_t^g$ is also imaginary for each $g \in G_2(\mathbb{F})$. The group $G_2(\mathbb{F})$ also acts linearly on $A^2\scO$, with $(x \wedge y)^g := x^g \wedge y^g$ for all $x,y \in \scO$ and $g \in G_2(\mathbb{F})$. Therefore, $$((i_s \wedge i_t)\tilde f)^g = (i_s i_t)^g = i_s^g i_t^g = (i_s^g \wedge i_t^g)\tilde f = ((i_s \wedge i_t)^g)\tilde f.$$ Since $A^2\scO$ is spanned by the wedge products $i_s \wedge i_t$, it follows that the linear map $\tilde f$ is an $\mathbb{F}[G_2(\mathbb{F})]$-homomorphism.

\begin{NoHyper}\ref{lem:g214dim}\end{NoHyper} As $\tilde f$ is an $\mathbb{F}[G]$-homomorphism by part \ref{prop:Fwedgehom}, $G_2(\mathbb{F})$ stabilises $\ker \tilde f$. Moreover, it follows from the definition of multiplication in $\mathbb{O}$ that, for each $t \in \mathbb{Z}/7\mathbb{Z}$, we have $(i_{t+1} \wedge i_{t+3})\tilde f = \mathrm{Im}(i_t) = i_t$. The image of $\tilde f$ is therefore equal to $\scO$, with dimension seven. We also have $\dim(A^2\scO) = \binom{\dim(\scO)}{2} = 21$, and hence $\dim(\ker \tilde f) = 21-7=14$.
\end{proof}

We have shown that when $\mathrm{char}(\mathbb{F}) \ne 2$, the group $G_2(\mathbb{F})$ stabilises a $14$-dimensional subspace of $A^2\scO$ corresponding to the alternating bilinear map given by $(x,y)f = \mathrm{Im}(xy)$ for $x,y \in \scO$. Gow \cite{gow} proves a stronger result when $\mathbb{F}$ is such that $\mathbb{O}$ is a division algebra, namely, that $G_2(\mathbb{F})$ stabilises a $14$-dimensional subspace of $A^2\scO$ corresponding to each alternating bilinear map from $\scO^2$ to $\scO$. However, our proof of Lemma \ref{lem:ftildecoll} holds even when $\mathbb{O}$ is a split algebra.

\begin{defn}[{\cite[Ch.~1.8.2]{BHRD}}]
\label{def:quaseq}
Let $G$ be a group, and let $W_1$ and $W_2$ be $\mathbb{F}[G]$-modules. Then $W_1$ and $W_2$ are \emph{quasiequivalent} if $W_1 \cong W_2^\alpha$ for some $\alpha \in \mathrm{Aut}(G)$, where $W_2^\alpha$ is the $\mathbb{F}[G]$-module obtained by \emph{twisting} $W_2$ by $\alpha$. Specifically, if $W_2$ affords the representation $\rho$, then $W_2^{\alpha}$ affords the representation $\rho_\alpha$, where $(g)\rho_\alpha := (g^\alpha)\rho$ for each $g \in G$.
\end{defn}

We now focus on the case where $\mathbb{F}$ is the finite field $\mathbb{F}_q$, with $q$ a power of an odd prime $p$, so that $G_2(\mathbb{F}) = G_2(q)$.

\begin{lem}
\label{lem:g2irrmod}
Up to isomorphism and twisting by field automorphisms, there are three distinct irreducible $\mathbb{F}_q[G_2(q)]$-modules of dimension at most $21$. These $G_2(q)$-modules are all absolutely irreducible, and have respective dimensions one, seven and seven if $p = 3$, and one, seven and $14$ otherwise. Furthermore, all irreducible $\mathbb{F}_q[G_2(q)]$-modules of fixed dimension $d \le 21$ are quasiequivalent, and the images of the afforded $\mathbb{F}_q$-representations are all conjugate in $\mathrm{GL}(d,q)$.
\end{lem}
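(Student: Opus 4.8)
The plan is to combine the classification of the low-dimensional irreducible modules of $G_2$ in defining characteristic with Steinberg's tensor product theorem and a descent argument from $\overline{\mathbb{F}_q}$ to $\mathbb{F}_q$. I would first record the $p$-restricted irreducible $\overline{\mathbb{F}_p}[G_2(q)]$-modules, equivalently the irreducible modules of the algebraic group $G_2$ with $p$-restricted highest weight: the trivial module $L(0)$ of dimension $1$; the module $L(\omega_1)$ of dimension $7$ (with $\omega_1, \omega_2$ the fundamental dominant weights), which by Lemma~\ref{lem:octonions} and Proposition~\ref{prop:g2imact} is realised over $\mathbb{F}_p$ on $\scO$; and the module $L(\omega_2)$, of dimension $14$ if $p \ge 5$ — where it is the adjoint module $\mathrm{Lie}(G_2)$ — and of dimension $7$ if $p = 3$. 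Every further $p$-restricted irreducible has dimension at least $26$; I would cite \cite{BHRD}, \cite{wilson}, \cite{schroder}, and/or L\"ubeck's tables for these dimensions, taking care over the exceptional cases $p = 3$, where $\mathrm{Lie}(G_2)$ is not simple and $\dim L(\omega_2) = 7$, and $p = 7$, where $\dim L(2\omega_1)$ drops to $26$. Writing $q = p^e$, Steinberg's tensor product theorem then expresses every irreducible $\overline{\mathbb{F}_p}[G_2(q)]$-module as a tensor product $L(\lambda_0) \otimes L(\lambda_1)^{(p)} \otimes \cdots \otimes L(\lambda_{e-1})^{(p^{e-1})}$ of Frobenius twists of $p$-restricted irreducibles, with dimension equal to the product of the dimensions of the factors; since the smallest nontrivial factor has dimension $7$ and $7^2 = 49 > 21$, at most one factor can be nontrivial. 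Hence the irreducible $\overline{\mathbb{F}_p}[G_2(q)]$-modules of dimension at most $21$ are precisely $L(0)$, the twists $L(\omega_1)^{(p^i)}$ of dimension $7$, and the twists $L(\omega_2)^{(p^i)}$ of dimension $14$ (if $p \ge 5$) or $7$ (if $p = 3$), for $0 \le i \le e - 1$.

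The second step is to descend to $\mathbb{F}_q$. Each of $L(0)$, $L(\omega_1)$ and $L(\omega_2)$ is realisable over the prime field $\mathbb{F}_p$ — the first two via the octonion algebra, and $L(\omega_2)$ via $\mathrm{Lie}(G_2)$ — and each is isomorphic to its twist under the $e$-fold Frobenius $L \mapsto L^{(q)}$, since $G_2(q)$ is the fixed-point group of the $e$-th power of the Frobenius morphism of the algebraic group. Thus each of these modules and each of its twists is absolutely irreducible and defined over $\mathbb{F}_q$, hence determines a unique irreducible $\mathbb{F}_q[G_2(q)]$-module. Conversely, if $V$ is an irreducible $\mathbb{F}_q[G_2(q)]$-module with $\dim V \le 21$, then $V \otimes_{\mathbb{F}_q} \overline{\mathbb{F}_q}$ is a direct sum of pairwise non-isomorphic Galois conjugates of a single absolutely irreducible module $W$, and $W$ — having dimension at most $21$ — occurs in the list above; as every module in that list is Galois-stable, all these conjugates are isomorphic to $W$, and since $\mathrm{End}_{\mathbb{F}_q[G_2(q)]}(V)$ is a finite division ring and hence a field by Wedderburn's theorem, it follows that $V \otimes_{\mathbb{F}_q} \overline{\mathbb{F}_q} \cong W$. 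So the irreducible $\mathbb{F}_q[G_2(q)]$-modules of dimension at most $21$ are exactly the $\mathbb{F}_q$-forms of the modules listed, all absolutely irreducible, with the asserted dimensions.

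It remains to organise this list under twisting by field automorphisms and to read off the final two assertions. The field automorphisms of $G_2(q)$ form a cyclic group of order $e$ generated by $g \mapsto g^p$, and twisting $L(\omega_j)$ by its $i$-th power yields $L(\omega_j)^{(p^i)}$; hence the twists $L(\omega_1)^{(p^i)}$ form one class under field-twisting, the twists $L(\omega_2)^{(p^i)}$ a second, and $L(0)$ a third. For $p \ge 5$ these three classes are distinct because the dimensions $1$, $7$, $14$ are distinct; for $p = 3$, where $L(\omega_1)$ and $L(\omega_2)$ are both $7$-dimensional, I would argue that they remain inequivalent under field automorphisms alone, as they are interchanged by the exceptional graph automorphism of $G_2(q)$ arising from the special isogeny in characteristic $3$ (whose square is $g \mapsto g^3$), which is not a field automorphism. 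So there are exactly three classes, of dimensions $1$, $7$, $7$ when $p = 3$. Finally, for each dimension $d \le 21$ the irreducible $\mathbb{F}_q[G_2(q)]$-modules of dimension $d$ form a single orbit under $\mathrm{Aut}(G_2(q))$ — trivially when $d \in \{1, 14\}$, and when $d = 7$ with $p \ge 5$, and via the graph automorphism above when $d = 7$ with $p = 3$. Hence any two such modules $V$ and $V'$ satisfy $V' \cong V^{\alpha}$ for some $\alpha \in \mathrm{Aut}(G_2(q))$, so they are quasiequivalent; and since the representation afforded by $V^{\alpha}$ has the same image in $\mathrm{GL}(d,q)$ as that afforded by $V$, the image afforded by $V'$ is $\mathrm{GL}(d,q)$-conjugate to that afforded by $V$.

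I expect the principal obstacle to be the dependence on the defining-characteristic classification — in particular, ruling out any irreducible of dimension strictly between $7$ and $21$, which needs care exactly at $p = 3$ and $p = 7$ — and, secondarily, the clean handling of the characteristic-$3$ phenomenon in which $L(\omega_1)$ and $L(\omega_2)$ are inequivalent even after field-twisting yet become quasiequivalent through the exceptional graph automorphism.
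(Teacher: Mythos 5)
Your proposal is correct and follows essentially the same route as the paper: both rest on the classification of low-dimensional irreducible modules for the algebraic group $G_2$ in defining characteristic (L\"ubeck's tables), Steinberg's theorems to pass to $G_2(q)$ and to conclude absolute irreducibility over $\mathbb{F}_q$, and the exceptional graph automorphism in characteristic $3$ to obtain quasiequivalence of the two seven-dimensional modules. The only difference is one of packaging: you unpack the Steinberg tensor product theorem and a Galois-descent/Wedderburn argument explicitly, where the paper instead cites Steinberg's restriction and splitting-field theorems directly.
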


\begin{proof}
Let $G$ be the linear algebraic group associated with $G_2(q)$, and let $K$ be the algebraic closure of the field $\mathbb{F}_q$. L\"ubeck \cite[Appendix A.49]{lubeck} shows that, up to isomorphism and twisting by field automorphisms, there are three distinct irreducible $K[G]$-modules of dimension at most $21$, with the same dimensions as the required $\mathbb{F}_q[G_2(q)]$-modules. As $G_2(q)$ is simply-connected, we have $G_2(q) < G$, and a theorem of Steinberg \cite[page 17]{humphreysmod} implies that the irreducible $K[G_2(q)]$-modules of dimension at most 21 are the restrictions to $G_2(q)$ of the aforementioned $K[G]$-modules. Another theorem of Steinberg \cite[page 42]{humphreysmod} implies that $\mathbb{F}_q$ is a splitting field for $G_2(q)$. Therefore, the irreducible $\mathbb{F}_q[G_2(q)]$-modules of dimension at most $21$ are all absolutely irreducible, and they can be identified with the $K[G_2(q)]$-modules by extending the scalars \cite[Corollary 9.8]{ichar}. Finally, when $p = 3$, the two distinct irreducible $\mathbb{F}_q[G_2(q)]$-modules of dimension seven are equivalent up to twisting by the exceptional graph automorphism of $G_2(q)$ \cite[pages 188--189]{humphreysmod}. The quasiequivalence result therefore follows from Definition \ref{def:quaseq}. Finally, since $G_2(q)$ is simple, each irreducible $G_2(q)$-module is either faithful or trivial. Hence the conjugacy result follows from the fact that faithful modules are quasiequivalent if and only if the afforded representations have conjugate images \cite[Lemma 1.8.6]{BHRD}.
\end{proof}

When $q = p$, the modules of Lemma \ref{lem:g2irrmod} are those distinct up to isomorphism, as $G_2(p)$ has no nontrivial field automorphisms.

\begin{prop}
\label{prop:g2octirred}
Let $V$ be a faithful seven-dimensional $\mathbb{F}_q[G_2(q)]$-module. Then $V$ is irreducible. In particular, $\scO$ is an irreducible $\mathbb{F}_q[G_2(q)]$-module.
\end{prop}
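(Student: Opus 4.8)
The plan is to read off the possible composition factors of $V$ from Lemma \ref{lem:g2irrmod} and then eliminate the reducible case by a dimension count. First I would observe that, by Lemma \ref{lem:g2irrmod}, every irreducible $\mathbb{F}_q[G_2(q)]$-module of dimension at most $21$ — and hence every composition factor of the seven-dimensional module $V$ — has dimension $1$, $7$ or $14$. Since $G_2(q)$ is simple, and therefore perfect, its only one-dimensional module is the trivial module. So a composition series of $V$ has each factor either trivial or an irreducible seven-dimensional module; the $14$-dimensional case cannot arise, being too large.

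Next, suppose for contradiction that $V$ is reducible. Then no composition factor of $V$ can be seven-dimensional, since such a factor would exhaust $V$ and force irreducibility; hence every composition factor of $V$ is trivial. Picking a basis of $V$ adapted to a composition series, every element of $G_2(q)$ acts as an upper unitriangular matrix, so the image of $G_2(q)$ in $\mathrm{GL}(7,q)$ lies inside the subgroup of upper unitriangular matrices, which has order $q^{21}$ and is in particular a $p$-group. As $V$ is faithful, $G_2(q)$ would then be a $p$-group — impossible, since $G_2(q)$ is a nonabelian simple group. Therefore $V$ is irreducible.

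For the last sentence, $\scO$ is seven-dimensional by construction, so it remains to check it is faithful. The kernel of the $G_2(q)$-action on $\scO$ is a normal subgroup of $G_2(q)$, and since $G_2(q)$ is simple it is either trivial or everything; it is not everything, because an automorphism of $\mathbb{O}$ that fixes every imaginary octonion fixes all of $\mathbb{O} = \langle 1 \rangle \oplus \scO$ and hence is the identity. So $\scO$ is faithful, and the first part of the proposition applies. I do not expect a genuine obstacle here: the only points needing care are that Lemma \ref{lem:g2irrmod} really does account for all composition factors of $V$ (it does, since it classifies all irreducibles of dimension $\le 21$) and that $\scO$ is faithful rather than merely nonzero.
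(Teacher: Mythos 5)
Your proposal is correct and follows the same skeleton as the paper's proof: both invoke Lemma \ref{lem:g2irrmod} to conclude that if $V$ were reducible then every composition factor would be the trivial module, and then derive a contradiction with faithfulness. The two arguments diverge only in how that contradiction is extracted. The paper uses the fact that, because $G_2(q)$ is perfect, the trivial module has no nontrivial self-extensions, so a module with all composition factors trivial is itself trivial -- contradicting faithfulness directly. You instead observe that such a module realises $G_2(q)$ as a group of unitriangular matrices, i.e.\ embeds it in a $p$-group of order $q^{21}$, which is impossible for a nonabelian simple group. Your version is slightly more elementary (it avoids the self-extension/cohomological fact) at the cost of concluding only that the image is a $p$-group rather than that the action is literally trivial; both suffice. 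Your explicit verification that $\scO$ is faithful (an automorphism of $\mathbb{O}$ fixing $\scO$ pointwise also fixes $1$ and hence all of $\mathbb{O}=\langle 1\rangle\oplus\scO$) is a point the paper leaves implicit, and is a worthwhile addition.
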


\begin{proof}
Suppose that $V$ is reducible. Then we have from Lemma \ref{lem:g2irrmod} that each $\mathbb{F}_q[G_2(q)]$-composition factor of $V$ is the unique one-dimensional $\mathbb{F}_q[G_2(q)]$-module, that is, the trivial irreducible module. Since $G_2(q)$ is perfect, the trivial module has no nontrivial self-extensions. This implies that $G_2(q)$ acts trivially on $V$, contradicting faithfulness.
\end{proof}

For a corresponding result when $\mathbb{F}$ is an infinite field, see \cite[Proposition 4]{jacobsonexcep}.

\begin{thm}
\label{thm:g2maxsubmod}
Let $V$ be an irreducible seven-dimensional $\mathbb{F}_q[G_2(q)]$-module. Then the $\mathbb{F}_q[G_2(q)]$-module $A^2V$ contains a submodule of dimension $14$, but no proper submodule of higher dimension.
\end{thm}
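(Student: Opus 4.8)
The plan is to reduce the statement to the single module $\scO$ and then to deduce the bound on submodule dimensions from self-duality. By Lemma~\ref{lem:g2irrmod}, the given module $V$ is quasiequivalent to $\scO$, so $V \cong \scO^{\alpha}$ for some $\alpha \in \mathrm{Aut}(G_2(q))$, and hence $A^2 V \cong (A^2\scO)^{\alpha}$. A twist $(A^2\scO)^{\alpha}$ has the same underlying vector space as $A^2\scO$ and, since $\alpha$ is an automorphism, exactly the same $G_2(q)$-invariant subspaces, so $A^2 V$ and $A^2\scO$ have submodules of precisely the same dimensions. It therefore suffices to treat $A^2\scO$. The required $14$-dimensional submodule is $\ker\tilde f$, by part~\ref{lem:g214dim} of Lemma~\ref{lem:ftildecoll}, so the whole theorem comes down to showing that no \emph{proper} submodule of $A^2\scO$ has dimension greater than $14$.

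For this I would use self-duality rather than try to analyse $\ker\tilde f$ directly, which, being reducible when $p = 3$, would force a case analysis. The bilinear form $\beta$ from the proof of Proposition~\ref{prop:g2imact} restricts to a non-degenerate $G_2(q)$-invariant symmetric form on $\scO = \langle 1 \rangle^{\perp}$, so $\scO$ is self-dual; it is also absolutely irreducible, by Lemma~\ref{lem:g2irrmod} together with Proposition~\ref{prop:g2octirred}. This form induces a non-degenerate $G_2(q)$-invariant symmetric bilinear form on $A^2\scO$, with respect to which $M \mapsto M^{\perp}$ is an inclusion-reversing involution on the lattice of submodules satisfying $\dim M + \dim M^{\perp} = 21$. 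Hence it is enough to prove the statement dual to the one we want, at the bottom of the lattice: that $A^2\scO$ has \emph{no} nonzero submodule of dimension at most $6$. Indeed, granting this, any proper submodule $M$ has $M^{\perp} \ne 0$ (otherwise $M = M^{\perp\perp} = A^2\scO$), so $\dim M^{\perp} \ge 7$ and therefore $\dim M \le 14$.

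It remains to show that $A^2\scO$ has no nonzero submodule $N$ with $\dim N \le 6$. Every composition factor of such an $N$ has dimension at most $6$, so by Lemma~\ref{lem:g2irrmod} each is the trivial module; as $G_2(q)$ is perfect, the trivial module has no nontrivial self-extension, so $G_2(q)$ acts trivially on $N$, giving $N \le (A^2\scO)^{G_2(q)}$. Finally, $(A^2\scO)^{G_2(q)} = 0$: a nonzero fixed vector of $A^2\scO$ would correspond, via the isomorphism $\scO \cong \scO^{*}$ furnished by $\beta$, to a nonzero $G_2(q)$-invariant \emph{alternating} bilinear form on $\scO$; but Schur's lemma forces every invariant bilinear form on the absolutely irreducible module $\scO$ to be a scalar multiple of $\beta|_{\scO}$, which is a nonzero symmetric form and hence, since $\mathrm{char}(\mathbb{F}_q) \ne 2$, is not alternating. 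I expect this final step to be the main obstacle and the crux of the argument: the identification of $(A^2\scO)^{G_2(q)}$ with the (empty) set of invariant alternating forms, together with the fact that the unique invariant form on $\scO$ is symmetric, is what forces the $14$-dimensional space $\ker\tilde f$ to be a maximal proper submodule.
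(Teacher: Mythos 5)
Your proof is correct, but it reaches the upper bound by a genuinely different route from the paper. Both arguments begin identically: reduce to $\scO$ via quasiequivalence (Lemma \ref{lem:g2irrmod} and Proposition \ref{prop:g2octirred}) and obtain the $14$-dimensional submodule from Lemma \ref{lem:ftildecoll}\ref{lem:g214dim}. For the bound, the paper works at the \emph{top} of the submodule lattice: it takes a maximal proper submodule $U$, notes that the irreducible quotient $W = A^2V/U$ has dimension $1$, $7$ or $14$ by Lemma \ref{lem:g2irrmod}, excludes $\dim W = 1$ because a trivial quotient would yield a $G_2(q)$-invariant nondegenerate alternating form on the odd-dimensional irreducible $V$, and then excludes $\dim U = 7$ (relevant only for $p>3$) precisely because the $14$-dimensional submodule is already known to exist. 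You instead work at the \emph{bottom}: the nondegenerate invariant symmetric form on $A^2\scO$ induced by $\beta$ gives the perp involution with $\dim M + \dim M^{\perp} = 21$, so it suffices to exclude nonzero submodules of dimension at most $6$, which you do by the same composition-factor/perfectness argument used in Proposition \ref{prop:g2octirred}, plus the identification of $(A^2\scO)^{G_2(q)}$ with the space of invariant alternating forms on $\scO$ --- zero, since by Schur's lemma the only invariant form on the absolutely irreducible self-dual $\scO$ is the symmetric one. The two exclusions of a trivial factor (quotient in the paper, submodule for you) are dual to one another under your form on $A^2\scO$. What your route buys is that the bound $14$ falls out of duality alone, with Lemma \ref{lem:ftildecoll} needed only for the existence half; the small cost is the (routine, and correct, but not spelled out) construction and nondegeneracy check of the induced form on the exterior square. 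Both arguments lean equally on the classification in Lemma \ref{lem:g2irrmod}.
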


\begin{proof}
Let $U$ be a proper submodule of $A^2V$ with the highest possible dimension. Then $U$ is a maximal submodule of $A^2V$, and thus the correspondence theorem implies that the quotient $W:=A^2V/U$ is an irreducible $\mathbb{F}_q[G_2(q)]$-module. Suppose that $\dim(W)=1$, so that $W \cong \mathbb{F}_q$. Then $W$ is trivial, as $G_2(q)$ is nonabelian and simple. Additionally, the map $\beta: V \times V \to W$ defined by $(x,y)\beta:= x \wedge y + U$ for $x, y \in V$ is an alternating bilinear form on $V$. Since $U$ is $G_2(q)$-invariant and $W$ is trivial, if $g \in G_2(q)$, then $$(x^g,y^g)\beta = x^g \wedge y^g + U = (x \wedge y)^g + U = (x \wedge y + U)^g = x \wedge y+U = (x,y)\beta.$$ Thus $G_2(q)$ preserves $\beta$, which is clearly nonzero as $U$ is a proper subspace of $A^2V$. The radical of $\beta$ is therefore a proper $G_2(q)$-invariant subspace of $V$, which is irreducible, and hence $\beta$ is nondegenerate. However, for $V$ to admit a nondegenerate alternating form, it must have even dimension \cite[Proposition 2.4.1]{kleidman}. This is a contradiction, and thus $\dim(W) \ne 1$.

We therefore have from Lemma \ref{lem:g2irrmod} that $W$ has dimension seven if $p = 3$, and dimension seven or $14$ if $p > 3$. Since $\dim(U) = \dim(A^2V) - \dim(W)$, with $\dim(A^2V) = \binom{\dim(V)}{2} = 21$, the dimension of $U$ is $14$ if $p = 3$, and seven or $14$ if $p > 3$. Thus we are done if $p = 3$. For $p \ge 3$ in general, we know from Proposition \ref{prop:g2octirred} that $\scO$ is an irreducible $\mathbb{F}_q[G_2(q)]$-module of dimension seven. Hence $\scO \cong V^\alpha$ for some $\alpha \in \mathrm{Aut}(G_2(q))$ by Lemma \ref{lem:g2irrmod}, and this implies that $A^2\scO \cong A^2(V^\alpha)$. It is easy to see that $A^2(V^\alpha)$ and $(A^2V)^\alpha$ are equal as $\mathbb{F}_q[G_2(q)]$-modules, and that $G_2(q)$ stabilises the same subspaces of $A^2V$ and $(A^2V)^\alpha$, which are equal as vector spaces. Since $G_2(q)$ stabilises a $14$-dimensional subspace of $A^2\scO$ by Lemma \ref{lem:ftildecoll}\ref{lem:g214dim}, it follows that $\dim(U) = 14$.
\end{proof}

\section{Overgroups of $G_2(q)$ in $\mathrm{GL}(7,q)$}
\label{sec:overgroups}

In this section, we establish results about certain overgroups of $G_2(q)$ in $\mathrm{GL}(7,q)$ that we will use in \S\ref{sec:mainproof} to prove Theorem \ref{thm:main}. Although this theorem only applies to the case where $q$ is an odd prime, we consider here the general case with $q$ a power of an odd prime. We let $G:=G_2(q) < \mathrm{GL}(7,q)$ be the image of a fixed irreducible seven-dimensional $\mathbb{F}_q$-representation of $G_2(q)$. We know from Lemma \ref{lem:g2irrmod} that this representation is absolutely irreducible, and that all such representations are quasiequivalent. Furthermore, $G$ preserves a nondegenerate orthogonal form $\beta$. Let $\mathrm{GO}(7,q)$ (respectively, $\mathrm{CGO}(7,q)$) be the group of isometries (respectively, similarities) of $\beta$. In addition, let $\mathrm{SO}(7,q) := \mathrm{GO}(7,q) \cap \mathrm{SL}(7,q)$, and let $\Omega(7,q) := (\mathrm{SO}(7,q))'$. Bray \textit{et al.}~\cite[Table 8.40]{BHRD} show that $G$ is a maximal subgroup of $\Omega(7,q)$, and that $G$ lies in the Aschbacher class of $\mathrm{GO}(7,q)$ denoted by $\mathcal{S}$, consisting of the maximal subgroups that are absolutely irreducible, not of geometric type and that are, modulo a central subgroup, almost simple. Note that $G$ is perfect, as it is nonabelian and simple.

\begin{lem}
\label{lem:g2overcoll}
Let $Z:= Z(\mathrm{GL}(7,q))$ and $Z_0:= Z(\mathrm{SL}(7,q))$. Then:
\begin{enumerate}[label={(\roman*)}, font=\upshape]
\item the only maximal subgroup of $\mathrm{SL}(7,q)$ that contains $G$ is $Z_0 \mathrm{SO}(7,q)$; \label{lem:g2over}
\item if $G \le X \le \mathrm{SL}(7,q)$ and $\Omega(7,q) \not\le X$, then $X \le Z_0 G$; and \label{prop:g2inso7}
\item the group $Z G$ is the normaliser of $G$ in $\mathrm{GL}(7,q)$, and also the normaliser of $Z_0 G$ in $\mathrm{GL}(7,q)$. \label{lem:g27norm}
\end{enumerate}
\end{lem}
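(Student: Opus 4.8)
My plan is to assemble the structural facts first — from which part \ref{lem:g27norm} drops out immediately — then prove part \ref{lem:g2over} via Aschbacher's theorem, and finally deduce part \ref{prop:g2inso7} from part \ref{lem:g2over}.

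\emph{Preliminaries.} Since $G$ is perfect it lies in the kernels of the determinant and of the spinor norm, so the fact that $G$ preserves $\beta$ gives the chain $G \le \Omega(7,q) \le \mathrm{SO}(7,q) \le Z_0\mathrm{SO}(7,q) \le \mathrm{SL}(7,q)$. As $q$ is odd, a scalar $\mu I$ with $\mu^7 = 1$ preserves $\beta$ only when $\mu^2 = 1$, hence only when $\mu = 1$; thus $Z_0 \cap \mathrm{SO}(7,q) = 1$, the subgroup $Z_0\mathrm{SO}(7,q)$ equals $Z_0 \times \mathrm{SO}(7,q)$, and (as $\Omega(7,q)$ is perfect) its derived subgroup is $\mathrm{SO}(7,q)' = \Omega(7,q)$. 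Because $G$ is absolutely irreducible by Lemma \ref{lem:g2irrmod}, Schur's lemma gives $C_{\mathrm{GL}(7,q)}(G) = Z$; and as $G$ is self-dual with $\beta$ its only invariant form up to scalars, $N_{\mathrm{GL}(7,q)}(G) \le \mathrm{CGO}(7,q)$ and $N_{\mathrm{GL}(7,q)}(G)/Z$ embeds in $\mathrm{Aut}(G_2(q))$. No nontrivial outer automorphism of $G_2(q)$ is realised by conjugation inside $\mathrm{GL}(7,q)$: a nontrivial field-automorphism twist of $V$ is not isomorphic to $V$ by Steinberg's tensor product theorem (this is what underlies the phrase ``up to twisting by field automorphisms'' in Lemma \ref{lem:g2irrmod}), and when $p = 3$ the exceptional graph automorphism interchanges the two non-isomorphic seven-dimensional modules. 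Hence $N_{\mathrm{GL}(7,q)}(G) = ZG$. Intersecting with $\mathrm{SO}(7,q)$: an element $\lambda g$ with $\lambda \in \mathbb{F}_q^\times$, $g \in G$, lies in $\mathrm{SO}(7,q)$ only if $\lambda^2 = 1$ and $\lambda^7 = 1$, i.e.\ $\lambda = 1$, so $N_{\mathrm{SO}(7,q)}(G) = G$. Finally $G = (Z_0 G)'$ is characteristic in $Z_0 G$, so $N_{\mathrm{GL}(7,q)}(Z_0 G) \le N_{\mathrm{GL}(7,q)}(G) = ZG \le N_{\mathrm{GL}(7,q)}(Z_0 G)$; this proves \ref{lem:g27norm}.

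\emph{Part \ref{lem:g2over}.} Here $Z_0\mathrm{SO}(7,q)$ is the orthogonal member of Aschbacher class $\mathcal{C}_8$ for $\mathrm{SL}(7,q)$, which is maximal by \cite{kleidman,BHRD}; it contains $G$, so it suffices to show it is the unique such maximal subgroup. Let $M$ be a maximal subgroup of $\mathrm{SL}(7,q)$ with $G \le M$ and apply Aschbacher's theorem \cite{aschbacherthm}. Because $7$ is prime the tensor classes $\mathcal{C}_4,\mathcal{C}_7$ are empty, the imprimitive class $\mathcal{C}_2$ consists of monomial subgroups, and the field-extension class $\mathcal{C}_3$ of subgroups of $\Gamma\mathrm{L}(1,q^7)$. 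Now $G$ is not reducible (class $\mathcal{C}_1$) since it is absolutely irreducible, and the simple group $G_2(q)$ lies in no monomial subgroup (it would otherwise embed in $S_7$), no metacyclic subgroup, and no extraspecial-type normaliser (class $\mathcal{C}_6$, which in any case requires $q = p \equiv 1 \pmod 7$ and has order far below $|G_2(q)|$). A subfield subgroup (class $\mathcal{C}_5$) would force $V$ to be realisable over a proper subfield of $\mathbb{F}_q$ — in the unitary subcase one additionally uses self-duality of $V$, so that preservation of a Hermitian form over $\mathbb{F}_q/\mathbb{F}_{q_0}$ gives $V \cong V^{(\sigma)}$ for the relevant subfield Frobenius $\sigma$ — contradicting the non-triviality of the field-automorphism action on the isomorphism class of $V$. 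The members of class $\mathcal{S}$ for $\mathrm{SL}(7,q)$ are, by the tables of \cite{BHRD}, almost simple groups of order bounded independently of $q$ (every absolutely irreducible subgroup of Lie type in defining characteristic and dimension seven, such as $G_2(q)$ itself or $\mathrm{SL}_2(q)$ acting via $\mathrm{Sym}^6$, is self-dual and hence lies in $\mathrm{GO}(7,q)$, not in $\mathcal{S}$); since $|G_2(q)| \ge |G_2(3)|$ exceeds each such order, $G \not\le M$ for $M \in \mathcal{S}$. The only remaining possibility is $M = Z_0\mathrm{SO}(7,q)$.

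\emph{Part \ref{prop:g2inso7}, and the main obstacle.} Let $G \le X \le \mathrm{SL}(7,q)$ with $\Omega(7,q) \not\le X$. Then $X \ne \mathrm{SL}(7,q)$, so $X$ lies in some maximal subgroup $M$; as $G \le M$, part \ref{lem:g2over} gives $M = Z_0 \times \mathrm{SO}(7,q)$. Let $\pi\colon Z_0 \times \mathrm{SO}(7,q) \to \mathrm{SO}(7,q)$ be the projection and set $Y := \pi(X)$, so $G \le Y \le \mathrm{SO}(7,q)$ (as $\pi$ restricts to the identity on $G$) and $X \le \pi^{-1}(Y) = Z_0 \times Y$. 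The subgroup $Y \cap \Omega(7,q)$ lies between $G$ and $\Omega(7,q)$, so by maximality of $G$ in $\Omega(7,q)$ it equals $G$ or $\Omega(7,q)$. In the latter case, for each $\omega \in \Omega(7,q)$ pick $x_\omega \in X$ with $\pi(x_\omega) = \omega$; the coset $x_\omega(X \cap Z_0)$ depends only on $\omega$, and $\omega \mapsto x_\omega(X \cap Z_0)$ is a homomorphism $\Omega(7,q) \to Z_0/(X \cap Z_0)$, necessarily trivial since $\Omega(7,q)$ is perfect, so $\omega \in X$ for every $\omega$ and $\Omega(7,q) \le X$, contrary to hypothesis. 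Hence $Y \cap \Omega(7,q) = G$, and therefore $[Y:G] = [Y : Y \cap \Omega(7,q)]$ divides $[\mathrm{SO}(7,q):\Omega(7,q)] = 2$; if this index were $2$ then $G \trianglelefteq Y$, forcing $Y \le N_{\mathrm{SO}(7,q)}(G) = G$, a contradiction, so $Y = G$ and $X \le Z_0 \times G = Z_0 G$. The real difficulty throughout is the Aschbacher analysis in \ref{lem:g2over} — above all the elimination of the subfield class $\mathcal{C}_5$, which hinges on the non-realisability of $V$ over proper subfields (Steinberg's theorem, as recorded in Lemma \ref{lem:g2irrmod}), and of class $\mathcal{S}$, which needs the absence of a large defining-characteristic irreducible almost simple group in dimension seven together with an order check, tightest at $q = 3$, on the bounded list of remaining $\mathcal{S}$-candidates. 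Granting these, everything else is an exercise in the structural facts collected in the Preliminaries.
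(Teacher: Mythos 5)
Your proposal is correct and follows essentially the same route as the paper: part (iii) via Schur's lemma plus the non-realisability of the outer automorphisms of $G_2(q)$ in $\mathrm{GL}(7,q)$, part (i) via the classification of maximal subgroups of $\mathrm{SL}(7,q)$ (you rerun the Aschbacher analysis where the paper quotes the Bray--Holt--Roney-Dougal tables directly, but the eliminations are the same), and part (ii) by reducing to $Z_0\mathrm{SO}(7,q)$ and exploiting the maximality of $G$ in $\Omega(7,q)$. Your part (ii) is slightly more elaborate than necessary --- the paper simply notes that $X \cap \Omega(7,q)$ is normal in $X$ and equals $G$, so $X \le N_{\mathrm{GL}(7,q)}(G) \cap \mathrm{SL}(7,q) = Z_0 G$ by Dedekind --- but your projection argument is valid.
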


\begin{proof}
\begin{NoHyper}\ref{lem:g2over}\end{NoHyper} Bray \textit{et al.}~\cite[Tables 8.35--8.36]{BHRD} list the maximal subgroups of $\mathrm{SL}(7,q)$, as follows:
\begin{enumerate}[label={(\alph*)}]
\item groups that are images of reducible representations; \label{gp1}
\item groups $H$ whose perfect core $H^\infty$ is the image of a representation that is not absolutely irreducible; \label{gp2}
\item groups $H$ such that a conjugate of $H^\infty$ in $\mathrm{GL}(7,q)$ is defined over a subfield of $\mathbb{F}_q$ of prime index; \label{gp3}
\item groups isomorphic to $Z_0 \times \mathrm{PSU}(3,3)$, when $q$ is prime and $q \equiv 1 \mod 4$; \label{gp5}
\item groups isomorphic to $7_{+}^{1+2} \rtimes \mathrm{Sp}(2,7)$, when either $q$ is prime and $q \equiv 1 \mod 7$, or $q$ is a cube of a prime and $q \equiv 2, 4 \mod 7$; \label{gp6}
\item groups isomorphic to $(q-1)^6 \rtimes S_7$, when $q \ge 5$; \label{gp7}
\item groups isomorphic to $(q_0-1,7) \times \mathrm{SU}(7,q_0)$ that preserve a unitary form up to scalars, when $q = q_0^2$; and \label{gp4}
\item groups isomorphic to $Z_0 \mathrm{SO}(7,q)$ that preserve an orthogonal form up to scalars. \label{gp8}
\end{enumerate}
Since $G$ is an $\mathcal{S}$ subgroup of $\mathrm{GO}(7,q)$, no conjugate of $G$ in $\mathrm{GL}(d,q)$ is defined over a proper subfield of $\mathbb{F}_q$ \cite[Definition 2.1.3]{BHRD}. Additionally, $G$ is the image of an absolutely irreducible representation. As $G$ is perfect, all of these properties must hold for each group $H$ containing $G$, and for the perfect core $H^\infty$ of $H$. Hence $G$ does not lie in any of the groups in \begin{NoHyper}\ref{gp1}--\ref{gp3}\end{NoHyper}. We also see by considering orders that none of the groups in \begin{NoHyper}\ref{gp5}--\ref{gp7}\end{NoHyper} can contain $G$. Furthermore, by \cite[Lemma 1.8.8]{BHRD}, $G$ does not preserve any unitary form up to scalars, and does not preserve any bilinear form up to scalars other than the scalar multiples of $\beta$. Thus $G$ does not lie in any group in \begin{NoHyper}\ref{gp4}\end{NoHyper}. The groups in \begin{NoHyper}\ref{gp8}\end{NoHyper} preserve distinct orthogonal forms up to scalars, and it follows that the only maximal subgroup of $\mathrm{SL}(7,q)$ that can contain $G$ is $Z_0 \mathrm{SO}(7,q)$. This group does indeed contain $\Omega(7,q)$ and therefore $G$. Note that $|Z_0|=(q-1,7)$, and thus $Z_0 \mathrm{SO}(7,q) = \mathrm{SO}(7,q)$ when $q \not\equiv 1 \mod 7$.

\begin{NoHyper}\ref{lem:g27norm}\end{NoHyper} 
The outer automorphisms of $G$ consist of the field automorphisms, as well as the exceptional graph automorphism when $q$ is a power of $3$. None of these automorphisms leaves any irreducible seven-dimensional $\mathbb{F}_q[G]$-module invariant, and so by Clifford theory, these automorphisms are not realised in $\mathrm{GL}(7,q)$. Hence $N_{\mathrm{GL}(7,q)}(G) = C_{\mathrm{GL}(7,q)}(G)G$, which is equal to $ZG$ by Schur's lemma.
As $G = (ZG)' = (Z_0 G)'$, we also have $N_{\mathrm{GL}(7,q)}(G) = N_{\mathrm{GL}(7,q)}(Z_0 G)$.

\begin{NoHyper}\ref{prop:g2inso7}\end{NoHyper} Suppose that $X \le \mathrm{SL}(7,q)$ is such that $G \le X$ and $\Omega(7,q) \not\le X$. By part \ref{lem:g2over}, $X \le Z_0 \mathrm{SO}(7,q)$, and so $X \cap \Omega(7,q)$ is normal in $X$. Since $G$ is maximal in $\Omega(7,q)$, we have $G = X \cap \Omega(7,q)$. Thus $$X \le N_{\mathrm{GL}(7,q)}(G) \cap (Z_0 \mathrm{SO}(7,q)) = (Z G) \cap (Z_0 \mathrm{SO}(7,q)) \le (Z G) \cap \mathrm{SL}(7,q),$$ where we have used part \ref{lem:g27norm}. Applying Dedekind's identity gives $X \le Z_0 G$.
\end{proof}

Figure \ref{fig:g2q} summarises what we have proved about the overgroups of $G$ in $\mathrm{GL}(7,q)$. Note that $\Omega(7,q)$ is a maximal subgroup of $\mathrm{SO}(7,q)$, with index two \cite[Table 1.3]{BHRD}.

\clearpage

\begin{figure}[t]
\centering
\includegraphics{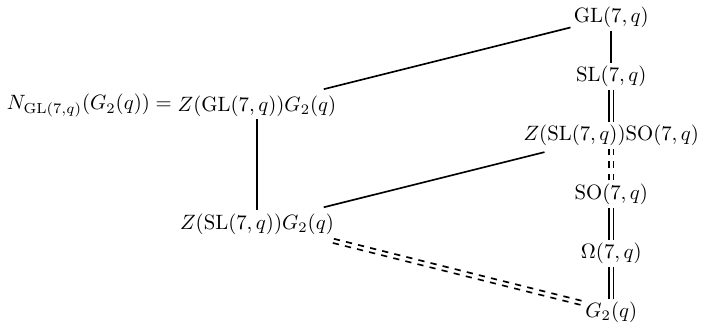}
\caption{Overgroups of $G_2(q)$ in $\mathrm{GL}(7,q)$, for an odd prime power $q$. Double edges indicate maximal containment, and subgroups connected by dashed edges are equal when $q \not\equiv 1 \mod 7$.}
\label{fig:g2q}
\end{figure}

\section{Proof of the main theorem}
\label{sec:mainproof}

In this section, we use the results of \S\ref{sec:reducibility}--\ref{sec:overgroups} to prove Theorem \ref{thm:main}. We retain the notation established at the start of \S\ref{sec:overgroups}, but with $q$ equal to an odd prime $p$. In addition, let $V$ be the module that affords an absolutely irreducible seven-dimensional representation with image $G = G_2(p) \le \mathrm{GL}(7,p)$, and let $Z:= Z(\mathrm{GL}(7,p))$ (note that this representation is unique when $p>3$). By Lemma \ref{lem:g2irrmod} and Proposition \ref{prop:g2octirred}, $V$ can in fact be any faithful seven-dimensional $\mathbb{F}_p[G_2(p)]$-module.

\begin{proof}[Proof of Theorem \begin{NoHyper}\ref{thm:main}\end{NoHyper}]
By Theorem \ref{thm:g2maxsubmod}, $G$ stabilises a proper $14$-dimensional subspace $U$ of $A^2V$, but no proper subspace of a higher dimension. Since the centre of the special linear group $\mathrm{SL}(7,p)$ acts via scalar multiplication, $J:= Z(\mathrm{SL(7,p)}) G$ also stabilises $U$. However, $\Omega(7,p)$ acts irreducibly on $A^2V$ \cite[Table 1]{liebeck85}. Thus no subgroup of $\mathrm{SL}(7,p)$ that contains $\Omega(7,p)$ stabilises $U$, and hence $J$ is the stabiliser of $U$ in $\mathrm{SL}(7,p)$ by Lemma \ref{lem:g2overcoll}\ref{prop:g2inso7}.

Now, let $H$ be the stabiliser of $U$ in $\mathrm{GL}(7,p)$. Then $J = H \cap \mathrm{SL}(7,p)$, and since $\mathrm{SL}(7,p)$ is normal in $\mathrm{GL}(7,p)$, it follows that $J$ is normal in $H$. Thus $H$ is contained in the normaliser of $J$ in $\mathrm{GL}(7,p)$, which is $ZG$ by Lemma \ref{lem:g2overcoll}\ref{lem:g27norm}. This normaliser certainly stabilises $U$, as $Z$ acts via scalar multiplication, and hence $H$ is this normaliser. As in \eqref{setofP}, set $P:=P_U$. We have therefore that the $p$-group $P$ has nilpotency class two, rank seven and exponent $p$, with $A(P) =ZG$, and with $|P| = p^{7+21-14} = p^{14}$. As we have chosen the dimension of $U$ to be as high as possible, this is in fact the smallest order of such a $p$-group.

Finally, if $P_1$ and $P_2$ are isomorphic $p$-groups, then the action of $A(P_1)$ on $P_1/\Phi(P_1) \cong V$ is equivalent to that of $A(P_2)$ on $P_2/\Phi(P_2) \cong V$. However, we know from Lemma \ref{lem:g2irrmod} that, when $p = 3$, there are two distinct isomorphism classes of irreducible seven-dimensional $G$-modules, corresponding to two inequivalent actions of $ZG$ on $V$. Therefore, when $p = 3$, there exist two nonisomorphic $p$-groups with the specified properties.
\end{proof}

It is easy to see that $ZG$ is not a maximal subgroup of $\mathrm{GL}(7,p)$, and hence Theorem \ref{thm:main} is not a consequence of the theory of Bamberg \textit{et al.}~\cite{bamberg}. Note also that when $p = 3$, the distinct $p$-groups of Theorem \ref{thm:main} correspond to distinct $14$-dimensional subspaces $U$ of $A^2V$. Computations in Magma \cite{magma} show, in the case of each irreducible module $V$, that $A^2V$ contains a unique $14$-dimensional submodule $U$, with $A^2V/U \cong V$ as $G$-modules. Lemma \ref{lem:g2irrmod} implies that this isomorphism also holds when $p > 3$.

\section{Concluding remarks}
\label{sec:concl}

We have constructed a $p$-group $P$, for each odd prime $p$, whose automorphism group induces $Z(\mathrm{GL}(7,p)) G_2(p)$ on $P/\Phi(P)$. There are several generalisations of this result that would be of interest. For each $p$-group $Q$ of nilpotency class two, rank seven and exponent $p$, the group $A(Q)$ contains the scalars of $\mathrm{GL}(7,p)$. However, we know from the work of Bryant and Kov\'acs \cite{bryant} that there exists a $p$-group $R$ of rank seven such that $A(R) = G_2(p)$. Hence in order to induce precisely $G_2(p)$ on the Frattini quotient of such a $p$-group, we need to consider $p$-groups of a higher nilpotency class and/or exponent. Indeed, in the sequel paper mentioned in \S\ref{sec:intro}, the second author uses this approach to construct such a $p$-group $R$.

We would also like to induce a group related to $G_2(q)$ for each prime power $q$, including $q$ even. One possible approach here would be to study $G_2(q)$ as a subgroup of a general linear group defined over a field of prime order. However, determining the stabilisers in such a general linear group of the corresponding exterior square's $G_2(q)$-submodules is a difficult problem, and the theory of representing linear groups on nonabelian $2$-groups is not yet complete. Note also that when $q$ is even, the smallest dimension of a nontrivial irreducible $\mathbb{F}_q[G_2(q)]$-module is six, and there is no irreducible module of dimension seven \cite[Appendix A.49]{lubeck}. Further exploration of the reducibility of $G_2(\mathbb{F})$-modules for a general field $\mathbb{F}$ would also be an interesting avenue of research. Finally, the aforementioned sequel paper continues the programme of study that we have begun in this paper. Namely, this sequel explores each family of nontwisted exceptional groups via a similar, but more Lie-theoretic, approach.

\subsection*{Acknowledgements}
We are very grateful to the referee for a detailed reading of the paper and recommendations.

\bibliographystyle{plain}
\bibliography{G2refs}

\providecommand\noopsort[1]{}\def\cprime{$'$}
\begin{thebibliography}{10}

\bibitem{aschbacherthm}
M.~Aschbacher.
\newblock On the maximal subgroups of the finite classical groups.
\newblock {\em Invent. Math.}, 76(3):469--514, 1984.

\bibitem{bamberg}
J.~Bamberg, S.~P. Glasby, L.~Morgan, and A.~C. Niemeyer.
\newblock Maximal linear groups induced on the {F}rattini quotient of a
  {$p$}-group.
\newblock {\em J. Pure Appl. Algebra}, 222(10):2931--2951, 2018.

\bibitem{magma}
W.~Bosma, J.~Cannon, and C.~Playoust.
\newblock The {M}agma algebra system. {I}. {T}he user language.
\newblock {\em J. Symbolic Comput.}, 24(3-4):235--265, 1997.

\bibitem{BHRD}
J.~N. Bray, D.~F. Holt, and C.~M. Roney-Dougal.
\newblock {\em The Maximal Subgroups of the Low-Dimensional Finite Classical
  Groups}, volume 407 of {\em London Mathematical Society Lecture Note Series}.
\newblock Cambridge University Press, Cambridge, 2013.

\bibitem{bryant}
R.~M. Bryant and L.~G. Kov{\'a}cs.
\newblock Lie representations and groups of prime power order.
\newblock {\em J. London Math. Soc. (2)}, 17(3):415--421, 1978.

\bibitem{eick}
B.~Eick, C.~R. Leedham-Green, and E.~A. O'Brien.
\newblock Constructing automorphism groups of {$p$}-groups.
\newblock {\em Comm. Algebra}, 30(5):2271--2295, 2002.

\bibitem{gow}
R.~Gow.
\newblock Subspaces of $7 \times 7$ skew-symmetric matrices related to the
  group {$G_2$}.
\newblock \textup{Preprint, 2008, arXiv:0811.1298}.

\bibitem{holt}
D.~F. Holt, B.~Eick, and E.~A. O'Brien.
\newblock {\em Handbook of Computational Group Theory}.
\newblock Discrete Mathematics and its Applications (Boca Raton). Chapman \&
  Hall/CRC, Boca Raton, FL, 2005.

\bibitem{humphreysmod}
J.~E. Humphreys.
\newblock {\em Modular Representations of Finite Groups of {L}ie Type}, volume
  326 of {\em London Mathematical Society Lecture Note Series}.
\newblock Cambridge University Press, Cambridge, 2006.

\bibitem{ichar}
I.~M. Isaacs.
\newblock {\em Character Theory of Finite Groups}.
\newblock AMS Chelsea Publishing, Providence, RI, 2006.

\bibitem{jacobsonexcep}
N.~Jacobson.
\newblock {\em Exceptional {L}ie Algebras}, volume~1 of {\em Lecture Notes in
  Pure and Applied Mathematics}.
\newblock Marcel Dekker, Inc., New York, 1971.

\bibitem{kleidman}
P.~Kleidman and M.~Liebeck.
\newblock {\em The Subgroup Structure of the Finite Classical Groups}, volume
  129 of {\em London Mathematical Society Lecture Note Series}.
\newblock Cambridge University Press, Cambridge, 1990.

\bibitem{liebeck85}
M.~W. Liebeck.
\newblock On the orders of maximal subgroups of the finite classical groups.
\newblock {\em Proc. London Math. Soc. (3)}, 50(3):426--446, 1985.

\bibitem{lubeck}
F.~L{\"u}beck.
\newblock Small degree representations of finite {C}hevalley groups in defining
  characteristic.
\newblock {\em LMS J. Comput. Math.}, 4:135--169, 2001.

\bibitem{obrien}
E.~A. O'Brien.
\newblock The {$p$}-group generation algorithm.
\newblock {\em J. Symbolic Comput.}, 9(5-6):677--698, 1990.

\bibitem{schroder}
A.~K. Schr\"oder.
\newblock {\em The maximal subgroups of the classical groups in dimension 13,
  14 and 15}.
\newblock PhD thesis, School of Mathematics and Statistics, University of St
  Andrews, July 2015.

\bibitem{wilson}
R.~A. Wilson.
\newblock {\em The Finite Simple Groups}, volume 251 of {\em Graduate Texts in
  Mathematics}.
\newblock Springer-Verlag London, Ltd., London, 2009.

\end{thebibliography}

\end{document}